\definecolor{cobalt}{rgb}{0.0, 0.28, 0.67}
\definecolor{darkblue}{rgb}{0.0, 0.0, 0.55}
      \newtheorem{theorem}{Theorem}[section]
      \newtheorem{example}[theorem]{Example}
      \newtheorem{remark}[theorem]{Remark}
       \newtheorem{corollary}[theorem]{Corollary}
      \newtheorem{lemma}[theorem]{Lemma}
      \newtheorem{proposition}[theorem]{Proposition}
      \def\R{{\mathbb R}}
      \def\N{{\mathbb N}}
      \def\C{{\mathbb C}}
      \def\cA{\mathcal A}
      \def\cE{\mathcal E}
      \def\cS{\mathcal S}
      \def\cF{\mathcal F}
\newcommand{\df}[1]{{\bf{#1}}{\index{#1}}}
\def\be{\bold{e}}
\title{A note on decomposable maps on operator systems}
\author[Balasubramanian]{Sriram Balasubramanian${}^*$}
\address{Department of Mathematics\\
 IIT Madras, Chennai - 600036, India.}
\email{bsriram@iitm.ac.in, bsriram80@yahoo.co.in}
\thanks{${}^*$Research supported by the grant MTR/2018/000113 from the Department of Science and Technology, India.}
\subjclass[2010]{46L05, 15B48 (Primary), 47B65, 47L07 (Secondary)}
\keywords{Positive maps, Completely positive (cp) maps, Co-completely
positive (co-cp) maps, decomposable maps.}
\begin{document}

\maketitle

\begin{abstract}
This article contains a characterization of operator systems $\cS$
 with the property that every positive map $\phi:\cS \rightarrow M_n$
 is decomposable, as well as an alternate and a more direct proof of
 a characterization of decomposable maps due to E. St\o rmer.
\end{abstract}

\begin{section}{Introduction}
 Let $H$ denote a Hilbert space over $\mathbb C$
  and $B(H)$ the C*-algebra of bounded operators on $H$.
  Let $\cA$ be a unital C*-algebra. Without loss of generality,
  we shall assume $\cA$ to be a C*-subalgebra of $B(H)$ for some
  Hilbert space $H$. An operator system $\cS \subseteq \cA$ is a unital self-adjoint
  subspace of $\cA.$ Letting $M_n=B(\C^n)$ to denote the C*-algebra
  of $n\times n$ complex matrices, a linear map $\phi:\cS\to M_n$
 is \df{positive} if $\phi(s)\succeq 0$ whenever $s$
 is a positive element of $\cS.$ Given a positive integer
 $k,$ let $\phi_k=\phi \otimes I_k: \cS\otimes M_k\to M_n \otimes M_k$
  denote the linear map determined by $\phi_k(s \otimes X)=\phi(s)\otimes X.$
 The map $\phi$ is \df{completely positive}, or \df{cp} for short,
 if each $\phi_k$ is positive; that is, if $S \in \cS\otimes M_k$ is positive
 as an element of the algebra $B(H) \otimes M_k = B(\oplus_1^k H)$ of $k\times k$
 matrices with entries from $B(H)$, then $\phi_k(S)$ is positive in
 $M_n \otimes M_k= B(\C^n \otimes C^k).$ 

Let $t$ denote a transpose on $M_n.$ A mapping $\phi:\cS\to M_n$ is
 \df{co-cp} if $t\circ \phi$ is cp. As is well known, the definition
 of co-cp is independent of the choice of transpose
 since any two transposes are unitarily equivalent.
 The linear map  $\phi$ is said to be \textbf{decomposable}
 if it is a sum of a cp map and a co-cp map.
 Maps that are positive, but not completely so, like the generic
 decomposable map, are of importance in quantum information theory as 
entanglement detecting maps.

 Let $\cS^+$ denote its positive elements of an operator space
 $\cS\subseteq \cA$.  Given a positive integer $n$,
  it is evident that $\cS^+\otimes M_n^+$, which is the cone
  generated by elementary tensors $s \otimes X$ where
 both $s$ and $X$ are positive, is a subset of $(\cS \otimes M_n)^+.$
   Operator systems $\cS$ with the property that every positive map
   $\phi:\cS\to M_n$ is completely positive
 are characterized as follows.  See for instance Theorem 6.6 in \cite{P}. 

\begin{proposition}
 \label{p:prealldecomp}
  Every positive map $\phi:\cS\to M_n$ is completely positive if
 and only if  $\cS^+ \otimes M_n^+$ is dense in $(\cS\otimes M_n)^+$.
\end{proposition}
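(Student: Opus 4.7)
My plan is to use the standard duality between linear maps $\phi:\cS\to M_n$ and linear functionals on $\cS\otimes M_n$. To each $\phi$ I associate $L_\phi\in(\cS\otimes M_n)^*$ by $L_\phi(s\otimes X)=\operatorname{tr}(\phi(s)X^t)$; since $M_n$ is self-dual under the trace pairing, $\phi\mapsto L_\phi$ bijects linear maps $\cS\to M_n$ with linear functionals on $\cS\otimes M_n$. A one-line calculation gives the key identity
\[
  L_\phi(T)=\langle\psi,\phi_n(T)\psi\rangle\qquad(T\in\cS\otimes M_n),
\]
where $\psi=\sum_{i=1}^n e_i\otimes e_i\in\C^n\otimes\C^n$ and $\phi_n$ is the $n$-th ampliation. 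Two consequences are immediate: $\phi$ is positive if and only if $L_\phi\ge 0$ on elementary positive tensors $\cS^+\otimes M_n^+$ (using that $M_n^+$ is closed under transpose), and $\phi$ being cp forces $\phi_n(T)\succeq 0$ and hence $L_\phi(T)\ge 0$ for every $T\in(\cS\otimes M_n)^+$.

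For the ($\Leftarrow$) direction, I assume $\cS^+\otimes M_n^+$ is dense in $(\cS\otimes M_n)^+$ and take $\phi$ positive. Given $T\in(\cS\otimes M_n)^+$, approximate by $T_k\to T$ with $T_k=\sum_j s_j^{(k)}\otimes X_j^{(k)}\in\cS^+\otimes M_n^+$. Then $\phi_n(T_k)=\sum_j\phi(s_j^{(k)})\otimes X_j^{(k)}$ is a sum of tensor products of positive matrices, hence lies in $(M_n\otimes M_n)^+$. Since positive maps on operator systems are norm-continuous, so is $\phi_n$, and combined with norm-closedness of the cone $(M_n\otimes M_n)^+$ this yields $\phi_n(T)\succeq 0$, so $\phi$ is cp.

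For the ($\Rightarrow$) direction I argue by contraposition: if the containment $\overline{\cS^+\otimes M_n^+}\subseteq(\cS\otimes M_n)^+$ is strict, pick $T_0$ in the difference and apply Hahn--Banach in the self-adjoint part of $\cS\otimes M_n$ to produce a real-linear functional that is nonnegative on the closed convex cone $\overline{\cS^+\otimes M_n^+}$ and strictly negative at $T_0$. Extending to a self-adjoint $\C$-linear functional and writing it as $L_\phi$ for some $\phi:\cS\to M_n$, the nonnegativity condition forces $\phi$ to be positive, while $L_\phi(T_0)<0$ combined with the second consequence of the key identity above rules out complete positivity of $\phi$. The main subtlety I expect to need care with is precisely this last step: ensuring that the separating functional, after symmetrization, is realized in the form $L_\phi$ for a genuine linear map $\cS\to M_n$ and that one may take real parts without destroying the strict inequality at $T_0$ or the nonnegativity on the cone.
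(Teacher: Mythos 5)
Your overall strategy --- encoding $\phi$ as a functional on $\cS\otimes M_n$ via the identity $L_\phi(T)=\langle\psi,\phi_n(T)\psi\rangle$, and running Hahn--Banach separation for the forward direction --- is exactly the duality the paper uses (your $L_\phi$ is the paper's $s_\phi$ up to the harmless factor $n$), and your ($\Rightarrow$) direction is a faithful copy of the paper's proof of the analogous Theorem \ref{thm:alldecomp}: separate the point from the closed cone, use the cone structure to push the separating constant to $0$, symmetrize the functional, invert the duality to obtain $\phi$, and invoke the two consequences of the key identity (Lemma \ref{l:sphi} for positivity, the easy half of Theorem \ref{t:cp-criteria} to rule out complete positivity). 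That half is correct, including the subtleties you flag at the end.

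The gap is in the ($\Leftarrow$) direction. Your approximation argument establishes only that $\phi_n(T)\succeq 0$ for every $T\in(\cS\otimes M_n)^+$, i.e.\ that $\phi$ is $n$-positive; the concluding words ``so $\phi$ is cp'' silently invoke the theorem that an $n$-positive map into $M_n$ is automatically completely positive. That implication is true, but it is a genuine theorem (Choi; Theorem 6.1 of \cite{P}), not a formality, and it is precisely the step the paper's machinery exists to supply: Lemma \ref{l:pre4implies3}(i) compresses an arbitrary positive $S\in\cS\otimes M_m$, with $m$ arbitrary, to an element $T=\sum_{j,k} s_{j,k}\otimes w_jw_k^*$ of $(\cS\otimes M_n)^+$, and Lemma \ref{l:sphibasic} identifies $\langle\phi_m(S)\overline{w},\overline{w}\rangle$ with $s_\phi(T)$; together these give Theorem \ref{t:cp-criteria} and hence positivity of $\phi_m$ for \emph{all} $m$, not just $m=n$. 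So either cite Choi's theorem explicitly at that point, or replace the limit argument by: density plus Lemma \ref{l:sphi} gives $s_\phi\ge 0$ on $\overline{\cS^+\otimes M_n^+}\supseteq(\cS\otimes M_n)^+$, and Theorem \ref{t:cp-criteria} then yields complete positivity. As written, the one non-trivial step of this direction is asserted rather than proved.
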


  Using techniques from \cite{HOR} and \cite{W}, here we establish the
  analog of Proposition \ref{p:prealldecomp} for decomposable maps.
  Let $k \in \N$, $t$ denote a transpose on $M_k$ and
\begin{equation}
 \label{e:Jn}
   J_k(\cS) := \left \{S=\sum_{j=1}^k s_j \otimes x_j \,:\, S \succeq 0, \\ \sum_{j=1}^k  s_j\otimes t(x_j) \succeq 0 \right \}
      \subseteq \cS \otimes M_k.
\end{equation}

 \begin{theorem}
\label{thm:alldecomp}
Let $n \in \N$ and $\cS$ be an operator system in the unital C*-algebra $\cA$.
Every positive linear map $\psi: \cS \rightarrow M_n$ is decomposable if and only if
  $J_n(\cS) \subseteq \overline{\cS^+ \otimes M_n^+}$.
\end{theorem}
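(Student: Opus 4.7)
The plan is to argue by convex duality. Identify each linear map $\psi:\cS\to M_n$ with a linear functional on $\cS\otimes M_n$ via a Choi--Jamiolkowski-type pairing; with the specific choice
\[
\langle\psi,\sum_j s_j\otimes x_j\rangle := \sum_j\operatorname{tr}\bigl(\psi(s_j)\,t(x_j)\bigr),
\]
one checks that $\psi$ is positive, cp, respectively co-cp iff the associated functional is nonnegative on the three cones
\[
P:=\cS^+\otimes M_n^+,\quad Q:=(\cS\otimes M_n)^+,\quad Q':=\Bigl\{\sum_j s_j\otimes x_j : \sum_j s_j\otimes t(x_j)\succeq 0\Bigr\}.
\]
In particular $Q\cap Q'=J_n(\cS)$, and the convex cone $\cD:=\mathrm{CP}+\mathrm{CoCP}$ of decomposable maps is the positive-dual of $J_n(\cS)$.

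For the forward direction, assume every positive map is decomposable and let $S\in J_n(\cS)$. By Hahn--Banach it suffices to show that every continuous linear functional on $\cS\otimes M_n$ that is nonneg on $P$ is also nonneg at $S$. Any such functional corresponds under the pairing to a bounded positive map $\psi$, which by hypothesis decomposes as $\psi=\psi_1+\psi_2$ with $\psi_1$ cp and $\psi_2$ co-cp. Since $S\in Q\cap Q'$, both $\langle\psi_1,S\rangle$ and $\langle\psi_2,S\rangle$ are $\ge 0$, whence $\langle\psi,S\rangle\ge 0$, as required.

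For the converse, assume $J_n(\cS)\subseteq\overline P$ and let $\psi:\cS\to M_n$ be positive. The density hypothesis combined with positivity of $\psi$ yields $\langle\psi,S\rangle\ge 0$ for every $S\in J_n(\cS)$, so $\psi$ lies in the dual cone $J_n(\cS)^*=(Q\cap Q')^*=\overline{Q^*+(Q')^*}=\overline{\cD}$. To upgrade $\psi\in\overline{\cD}$ to $\psi\in\cD$ requires closedness of $\cD$. A useful observation here is that $\mathrm{CP}\cap(-\mathrm{CoCP})=\{0\}$: if $\phi$ is cp and $-\phi$ is co-cp, then for positive $s\in\cS$ both $\phi(s)$ and $-t(\phi(s))$ are positive in $M_n$, forcing $\phi(s)=0$; hence $\phi=0$ because $\cS$ is the span of its positive part.

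The main obstacle is precisely this closedness of $\cD=\mathrm{CP}+\mathrm{CoCP}$. In finite dimensions, the identity $\mathrm{CP}\cap(-\mathrm{CoCP})=\{0\}$ combined with the (easy) closedness of $\mathrm{CP}$ and $\mathrm{CoCP}$ suffices, by standard convex-analytic criteria for sums of closed cones, to make $\cD$ closed. In the general operator-system setting one has to reduce to a finite-dimensional situation, either by restricting $\psi$ to a suitably chosen finite-dimensional operator subsystem of $\cS$ (exploiting the finite-dimensionality of the target $M_n$, so that any obstruction to decomposability is witnessed by finitely many linear conditions) or via a weak-$*$ compactness argument in the dual of $\cL(\cS,M_n)$.
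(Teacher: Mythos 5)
Your forward direction is essentially the paper's argument: a Hahn--Banach separation of a point of $J_n(\cS)$ from the closed cone $\overline{\cS^+\otimes M_n^+}$, the identification of the separating functional with a positive map via Lemma~\ref{l:sphi}, and the easy half of St\o rmer's criterion (a cp functional is nonnegative on $(\cS\otimes M_n)^+$, a co-cp functional on the partially transposed positives, hence a decomposable functional is nonnegative on $J_n(\cS)$). The only thing you elide is the passage from $\operatorname{Re}\Lambda$ to a functional that is genuinely real on self-adjoint elements (the paper's $f(x)=\tfrac12\left(\Lambda(x)+\overline{\Lambda(x^*)}\right)$), which is routine. This half is fine.

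The converse is where you part ways with the paper, and where the gap lies. The paper disposes of it in two lines by quoting Theorem~\ref{t:decomposable-criteria}: if $s_\psi\ge 0$ on $J_n(\cS)$, then $\psi$ is decomposable. You instead set out to prove that statement from scratch, via $(Q\cap Q')^*=\overline{Q^*+(Q')^*}=\overline{\cD}$ followed by closedness of $\cD=\mathrm{CP}+\mathrm{CoCP}$. Your finite-dimensional ingredients are correct: $\mathrm{CP}\cap(-\mathrm{CoCP})=\{0\}$ holds for the reason you give, and together with closedness of the two summands this does yield closedness of the sum when $\cS$ is finite-dimensional. But for a general operator system $\cS\subseteq\cA$ the reduction you gesture at (``restrict to a finite-dimensional operator subsystem'' or ``a weak-$*$ compactness argument'') is not carried out, and it is not routine: the closedness of $\mathrm{CP}+\mathrm{CoCP}$ in the relevant weak-$*$ topology is precisely the nontrivial implication (ii)$\Rightarrow$(i) of Theorem~\ref{t:decomposable-criteria}, i.e., the content of St\o rmer's theorem, which even the paper does not reprove but cites to \cite{St82}. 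Note also that restricting $\psi$ to finite-dimensional subsystems and decomposing there does not by itself produce a decomposition on all of $\cS$; one needs a compactness or limiting argument to glue the local decompositions, and that is where the real work sits. So, as written, your converse has a genuine hole; it can be closed either by citing St\o rmer's theorem as the paper does, or by actually supplying the compactness argument you allude to.
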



It is well known that every positive linear map $\phi:M_p \rightarrow M_q$ is decomposable, whenever $p,q \in \N$ and $pq \le 6$.
(please see \cite{St63}, \cite{Stbook}, \cite{HOR}  and  \cite{W}). By combining this fact with Theorem \ref{thm:alldecomp}, one can immediately conclude the following.
\begin{corollary}
\label{cor:posiffdecomp}
If $p,q \in \N$ and $pq \le 6$, then $J_p(M_q) \subseteq \overline{M_q^+ \otimes M_p^+}$.
\end{corollary}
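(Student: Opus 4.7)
The plan is to obtain Corollary \ref{cor:posiffdecomp} as a one-step deduction from Theorem \ref{thm:alldecomp} combined with the classical result recalled just before its statement. In fact, the phrasing ``by combining this fact with Theorem \ref{thm:alldecomp}'' already signals that no additional argument is needed beyond organizing the inputs correctly.

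Concretely, I would proceed in two steps. First, invoke the classical theorem of St\o rmer (with refinements by Woronowicz and Horodecki--Horodecki): every positive linear map $M_a \to M_b$ is decomposable whenever $ab \le 6$. Since $qp = pq \le 6$, specializing to $a = q$, $b = p$ gives that every positive linear map $\psi: M_q \to M_p$ is decomposable. Second, apply Theorem \ref{thm:alldecomp} with operator system $\cS = M_q$ (sitting inside the unital C*-algebra $\cA = M_q$) and with $n = p$. The ``only if'' direction of that theorem then converts the decomposability of every positive $\psi: M_q \to M_p$ into the set inclusion $J_p(M_q) \subseteq \overline{M_q^+ \otimes M_p^+}$, which is precisely the conclusion of the corollary.

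There is no real obstacle here: the corollary is essentially a direct transcription, and all the substantive work lies inside Theorem \ref{thm:alldecomp} itself. The only care required is verifying the index conventions of (\ref{e:Jn}): the subscript in $J_p(M_q)$ refers to the codomain dimension while the argument $M_q$ plays the role of the operator system $\cS$, so one must read off $\cS \leftrightarrow M_q$ and $n \leftrightarrow p$ before invoking the theorem. As a side remark, translated into the language of quantum information theory, the inclusion $J_p(M_q) \subseteq \overline{M_q^+ \otimes M_p^+}$ is the well-known statement that every PPT element of $M_q \otimes M_p$ lies in the closure of the separable cone whenever $pq \le 6$, so the route offered here is a clean derivation of the Horodecki criterion via the duality between positive maps and positivity in tensor products.
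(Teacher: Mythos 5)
Your proposal is correct and is exactly the argument the paper intends: the corollary follows immediately from the classical decomposability result for positive maps $M_q \to M_p$ with $pq \le 6$ together with the forward direction of Theorem \ref{thm:alldecomp} applied to $\cS = M_q$ and $n = p$. Your attention to the index conventions (that the subscript of $J_p$ matches the codomain dimension) is the only point requiring care, and you handle it correctly.
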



The proof of Theorem~\ref{thm:alldecomp} is based upon a result of
 E. St{\o}rmer.  Let $L(\cS, M_n)$ denote the vector space of linear maps 
from $\cS$ to $M_n$. The \df{dual functional} $s_\phi:\cS\otimes M_n \rightarrow \C$, 
 associated to the linear map $\phi \in L(\cS, M_n)$, is the mapping defined by
 \begin{equation}
\label{eq:dualfunc}
s_\phi(s \otimes x) = \langle (\phi(s) \otimes x)\, \be,\be\rangle,
\end{equation}
where $\otimes$ denotes the Kronecker product, $\{e_1,\dots,e_n\}$ is the standard 
orthonormal basis of $\C^n$ and $\be = \sum_{j=1}^n e_j \otimes e_j \in \C^n \otimes \C^n$.  
It is customary to identify $M_n(\cS)$ with $\cS \otimes M_n$, via the mapping 
\begin{equation}
\label{eq:id}
M_n(\cS) \ni [x_{i,j}] \mapsto \sum_{i,j=1}^n x_{i,j} \otimes E_{i,j} \in \cS \otimes M_n,
\end{equation}
where $E_{i,j}=e_ie_j^*$ are the standard matrix units in $M_n$.  Under this identification, the dual functional $s_{\phi}:M_n(\cS) \rightarrow \C$ becomes  
\[
s_{\phi}([x_{i,j}]) = \langle [\phi(x_{i,j})] \be, \be \rangle,
\]
where $\be = e_1 \oplus e_2 \dots \oplus e_n \in \C^{n^2}$. 
It is also to be noted that the definition of $s_{\phi}:\cS \otimes M_n \rightarrow \C$ given above, coincides with that given in  \cite{St09} namely, 
\[
s_{\phi}(s \otimes x) = n(Trace(\phi(s)t(x))),
\] 
where $t(x)$ is the (standard) transpose of $x$. 

\begin{remark}
Suppose that $f:\cS \otimes M_n \to \C$ is linear, then with 
$\phi: \cS \to M_n$ denoting the linear map determined by
\begin{equation}
\label{eq:inverse}
  \langle \phi(s) e_k,e_j \rangle = f(s \otimes e_j e_k^*),
\end{equation}
one gets that $s_\phi = f.$  It follows from equations \eqref{eq:dualfunc} and \eqref{eq:inverse} that the mapping 
\[
L(\cS,M_n) \ni \phi \mapsto s_{\phi} \in L(\cS \otimes M_n, \C)
\]
is bijective.
\end{remark}

\begin{theorem}[\cite{St09}]
\label{t:decomposable}
Let  $\cS \subseteq \cA$ be an operator system, $\phi:\cS \rightarrow M_n$ be a linear map.
The map $\phi:\cS \rightarrow M_n$ is decomposable if and only if its associated dual
functional $s_{\phi}:M_n(\cS) \rightarrow \C$ satisfies $s_{\phi}(S) \ge 0$ whenever $S \in J_n(\cS)$.
\end{theorem}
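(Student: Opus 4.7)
The plan is to prove the two directions separately: the forward implication by direct computation and the converse via a Hahn--Banach separation argument in the finite-dimensional duality between $L(\cS, M_n)$ and $\cS \otimes M_n$ induced by the pairing $(\psi, S) \mapsto s_\psi(S)$. \textbf{Forward direction:} suppose $\phi = \phi_1 + \phi_2$ with $\phi_1$ cp and $\phi_2$ co-cp, and take $S = \sum_j s_j \otimes x_j \in J_n(\cS)$. Under the identification \eqref{eq:id}, writing $S = [x_{i,j}]$, complete positivity of $\phi_1$ gives $[\phi_1(x_{i,j})] \succeq 0$, whence $s_{\phi_1}(S) = \langle [\phi_1(x_{i,j})]\be, \be \rangle \ge 0$. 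For $\phi_2$, the trace formula $s_\psi(s \otimes x) = n \operatorname{Tr}(\psi(s) t(x))$ together with $\operatorname{Tr}(AB) = \operatorname{Tr}(t(A)t(B))$ and $\phi_2 = t \circ (t \circ \phi_2)$ yields the identity
\[
s_{\phi_2}(S) \;=\; s_{t \circ \phi_2}\Bigl(\sum_j s_j \otimes t(x_j)\Bigr).
\]
Since $t \circ \phi_2$ is cp and $\sum_j s_j \otimes t(x_j) \succeq 0$ by the defining condition of $J_n(\cS)$, the cp argument just given applies to yield $s_{\phi_2}(S) \ge 0$.

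\textbf{Converse:} suppose $\phi$ is not decomposable. Assuming (see below) that the cone of decomposable maps is closed in $L(\cS, M_n)$, Hahn--Banach produces a linear functional $L$ with $L(\phi) < 0$ and $L \ge 0$ on decomposable maps. Via the bijection $\psi \mapsto s_\psi$ and the non-degenerate pairing above, $L$ corresponds to a self-adjoint $S_0 \in \cS \otimes M_n$ with $L(\psi) = s_\psi(S_0)$; it suffices to prove $S_0 \in J_n(\cS)$. For $v = \sum_i e_i \otimes v_i \in \C^n \otimes H$, the Stinespring-type map $\psi_v(a) := V^* a V$, where $V: \C^n \to H$ sends $e_k \mapsto v_k$, is cp, and a direct expansion of $s_{\psi_v}$ produces $s_{\psi_v}(S_0) = \langle S_0 v, v \rangle$, with $S_0$ viewed as an element of $M_n(\cS) \subseteq B(\C^n \otimes H)$. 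Nonnegativity of $L$ on cp maps therefore forces $S_0 \succeq 0$. Repeating the computation with $t \circ \psi_v$, which is co-cp, gives $s_{t \circ \psi_v}(S_0) = \langle \widetilde{S}_0 v, v \rangle$, where $\widetilde{S}_0 := \sum_\alpha s_\alpha \otimes t(x_\alpha)$ is the partial transpose of $S_0 = \sum_\alpha s_\alpha \otimes x_\alpha$; hence $\widetilde{S}_0 \succeq 0$ and $S_0 \in J_n(\cS)$, contradicting $s_\phi(S_0) < 0$.

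\textbf{Main obstacle:} the principal technical point is establishing closedness of the decomposable cone, which is what allows strict separation. In the finite-dimensional case, the cones of cp and co-cp maps are each closed, and their ``opposite-sign'' intersection is trivial: if $\phi$ is cp and $-\phi$ is co-cp, then both $\phi$ and $-\phi$ are positive, forcing $\phi = 0$. A standard finite-dimensional convexity fact---that the sum of two closed convex cones with trivial opposite-sign intersection is closed---then yields closedness of the decomposable cone. For general $\cS$ one would pass to the weak-$*$ topology on the space of bounded maps into $M_n$ (all positive maps into $M_n$ are automatically bounded) and run the same separation and testing arguments there.
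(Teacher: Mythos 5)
Your forward direction is correct and close in spirit to the paper's: the paper proves $\eqref{i:dc1}\Rightarrow\eqref{i:dc2}$ of Theorem \ref{t:decomposable-criteria} by showing $(\eta\otimes I_m)(S)\succeq 0$ for co-cp $\eta$ (Lemma \ref{l:etaJ}) and then specializes to $m=n$; you instead work directly with $s_\phi$ via the trace formula, and your identity $s_{\phi_2}(S)=s_{t\circ\phi_2}\bigl(\sum_j s_j\otimes t(x_j)\bigr)$ does check out. The real divergence is in the converse. The paper's route is $\eqref{i:dc4}\Rightarrow\eqref{i:dc2}$ (the vector-state computation using Lemmas \ref{l:pre4implies3} and \ref{l:sphibasic}) followed by $\eqref{i:dc2}\Rightarrow\eqref{i:dc1}$, and that last, hard implication is not proved in the paper at all --- it is cited from \cite{St82} and omitted. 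You instead attempt a self-contained duality argument: separate a non-decomposable $\phi$ from the closed cone of decomposable maps, realize the separating functional as evaluation at some $S_0\in\cS\otimes M_n$, and test against the cp maps $\psi_v=V^*(\cdot)V$ and their transposes to force $S_0\in J_n(\cS)$. Those test computations are correct (they are essentially St\o rmer's dual-cone argument from \cite{St08}, \cite{St09}), and if completed your proof would be more self-contained than the paper's.

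The gap is exactly where you flag it, and it is genuine for the theorem as stated, since $\cS$ is an arbitrary (possibly infinite-dimensional) operator system. Your closedness argument --- sum of two closed convex cones with trivial opposite-sign intersection --- is a finite-dimensional fact, as is the identification of an arbitrary linear functional on $L(\cS,M_n)$ with a point of $\cS\otimes M_n$. For general $\cS$ you need two things you have not supplied: (a) that the separation can legitimately take place in $(\cS\otimes M_n)^*\cong B(\cS,M_n)$ with its weak-$*$ topology, so that the separating functional is weak-$*$ continuous and hence given by some $S_0\in\cS\otimes M_n$ (note that positivity of $s_\phi$ on $J_n(\cS)\supseteq \cS^+\otimes M_n^+$ already forces $\phi$ positive, hence bounded, so $\phi$ does live in this dual space); and (b) that the decomposable cone is weak-$*$ closed there. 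Point (b) does not follow from your finite-dimensional lemma; the standard repair is Krein--\v{S}mulian together with a norm bound on the summands: if $\phi=\psi+\eta$ with $\psi$ cp and $\eta$ co-cp, then $\psi(1),\eta(1)\preceq\phi(1)$, so $\|\psi\|=\|\psi(1)\|\le\|\phi(1)\|$ and likewise for $\eta$, whence the decomposable maps in a fixed ball are the image under addition of a product of two weak-$*$ compact sets and therefore weak-$*$ closed. Until (b) is in place your converse is proved only for finite-dimensional $\cS$.
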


A second contribution of this article is to give an
alternate and a more direct proof of Theorem \ref{t:decomposable}
by using the techniques developed in Chapter 6 of \cite{P}.
This approach also yields a simpler proof of a characterization
of a cp map $\phi:\cS \rightarrow M_n$ in terms of its associated
dual functional. Please see Theorem \ref{t:cp-criteria} in Section 3.
\end{section}

\begin{section}{Preliminaries}
This section contains some lemmas that will be used in the sequel.

Given an orthonormal basis $\cE$ of a Hilbert space $E$
the linear map  $t_{\cE}:B(E)\to B(E)$ uniquely determined by the property
\[
 \langle t_{\cE}(T) y, x\rangle = \langle Tx,y\rangle.
\]
for all $x,y \in \cE$ is positive and isometric and is
 the \textbf{transpose on $E$ associated to $\cE.$}

If $\cF$ is
 an orthonormal basis on a Hilbert space $F$, then
 $\cE\otimes \cF =\{e\otimes f: e\in \cE,\, f\in\cF\}$
 is an orthonormal basis for $E\otimes F$ and moreover,
\[
 t_{\cE\otimes \cF}  = t_{\cE}\otimes t_{\cF}.
\]
 In particular, $t_{\cE}\otimes t_{\cF}$ is a positive map.

 Given a unitary $U$ on $E$ the set $\cF=U\cE$ is also an orthonormal
 basis and an elementary computation shows, for $T\in B(E),$
\[
  t_{\cF}(T) = V t_{\cE}(T) V^*,
\]
 where $V=U t_{\cE}(U)^*$ is unitary. Thus any two transposes on $E$ are
 unitarily equivalent. As a consequence the notion of co-cp
 for a  linear map from an  operator space into $B(\C^n)$
 is independent of the choice of transpose (basis)
 on $B(\C^n)$ (of $\C^n$).

Recall the identification of $\cS \otimes M_n$ with $M_n(\cS)$ from \eqref{eq:id}. The following result (Lemma 1 in \cite{St08}) \& Lemma 6.5 in \cite{P}) explains the significance of the dual functional.  

 \begin{lemma}
 \label{l:sphi}
The linear map $\phi:\cS \rightarrow M_n$ is positive if and only if the linear
functional $s_{\phi}:M_n(\cS) \rightarrow \C$ takes positive values
on $\cS^+ \otimes M_n^+$.
\end{lemma}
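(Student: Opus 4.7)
The plan is to exploit the definition $s_\phi(s \otimes x) = \langle (\phi(s) \otimes x)\be, \be\rangle$ in both directions, together with the fact that the Kronecker product of two positive semidefinite matrices is positive semidefinite, plus a judicious choice of rank-one test element for the converse.

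For the forward direction, assume $\phi$ is positive and take $s \in \cS^+$ and $x \in M_n^+$. Then $\phi(s) \succeq 0$ in $M_n$, so the Kronecker product $\phi(s) \otimes x$ is positive semidefinite in $M_{n^2}$, whence
\[
s_\phi(s \otimes x) = \langle (\phi(s)\otimes x)\be, \be\rangle \ge 0.
\]
Since every element of the cone $\cS^+ \otimes M_n^+$ is a finite sum $\sum_i s_i \otimes x_i$ with $s_i \in \cS^+$ and $x_i \in M_n^+$, linearity of $s_\phi$ finishes this direction.

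For the reverse direction, fix $s \in \cS^+$; the goal is to deduce $\phi(s) \succeq 0$. For each $v \in \C^n$, set $x = \bar{v}\bar{v}^*$, where $\bar{v}$ denotes the entrywise conjugate of $v$. Since $x$ is a rank-one positive element of $M_n$, the tensor $s \otimes x$ lies in $\cS^+ \otimes M_n^+$, so $s_\phi(s \otimes x) \ge 0$ by hypothesis. Using $\bar{v}^* e_j = v_j$, a short computation gives
\[
(\phi(s)\otimes \bar{v}\bar{v}^*)\be = \sum_{j=1}^n \phi(s) e_j \otimes v_j \bar{v} = \phi(s)v \otimes \bar{v},
\]
and then pairing with $\be = \sum_k e_k \otimes e_k$ collapses to
\[
s_\phi(s \otimes x) = \sum_{k=1}^n \langle \phi(s)v,e_k\rangle\,\langle \bar{v}, e_k\rangle = v^*\phi(s)v.
\]
Thus $v^*\phi(s)v \ge 0$ for every $v \in \C^n$, which forces both self-adjointness and positivity of $\phi(s)$, as required.

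The main (and only) obstacle is keeping the complex conjugations straight: the right test element is $\bar{v}\bar{v}^*$ rather than $vv^*$, because contracting against the maximally entangled vector $\be$ introduces a transpose on the $M_n$-factor. Once this choice is made, both implications reduce to the single observation that an inner product of a positive operator against $\be$ is nonnegative.
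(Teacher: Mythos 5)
Your proof is correct, and it is essentially the standard argument: the paper does not prove this lemma itself but cites Lemma 1 of \cite{St08} and Lemma 6.5 of \cite{P}, where the same two ingredients (positivity of Kronecker products for the forward direction, rank-one test elements for the converse) appear. Note also that your key identity $s_\phi(s\otimes \bar v\bar v^*)=v^*\phi(s)v$ is exactly the $y=z=\bar v$ case of the paper's own Lemma \ref{l:sphibasic}, so you could shorten the converse by invoking that lemma directly; your handling of the conjugations is consistent with it.
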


The notion of  cp and co-cp maps easily extends to maps from
an operator system into $B(E)$ for a Hilbert space $E.$

\begin{lemma}
\label{l:etaJ}
  Suppose $E$ is a Hilbert space.
  If $\eta:\cS\to B(E)$ is co-cp, $m \in \N$ and $S \in J_m(\cS)$, then
 $(\eta \otimes I_m)(S)\succeq 0$.
\end{lemma}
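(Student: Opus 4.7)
The plan is to reduce positivity of $(\eta\otimes I_m)(S)$ to positivity under a map already known to be cp, by applying a suitable transpose on $B(E)\otimes M_m$. Fix a transpose $t_E$ on $B(E)$ associated to some orthonormal basis of $E$, and let $t_m$ denote the transpose on $M_m$ appearing in the definition of $J_m(\cS)$. Because $\eta$ is co-cp, the composition $t_E\circ\eta:\cS\to B(E)$ is cp, so in particular its $m$-th amplification $(t_E\circ\eta)\otimes I_m:\cS\otimes M_m\to B(E)\otimes M_m$ is a positive map.

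Write $S=\sum_{j=1}^m s_j\otimes x_j\in J_m(\cS)$, so that both $S$ and $T:=\sum_{j=1}^m s_j\otimes t_m(x_j)$ are positive in $\cS\otimes M_m$. A direct computation on the elementary tensors $s_j\otimes x_j$ yields the identity
\[
(t_E\otimes t_m)\bigl((\eta\otimes I_m)(S)\bigr) \;=\; \bigl((t_E\circ\eta)\otimes I_m\bigr)(T).
\]
By the previous paragraph, the right-hand side is positive in $B(E)\otimes M_m$, so $(t_E\otimes t_m)\bigl((\eta\otimes I_m)(S)\bigr)\succeq 0$.

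To finish, I would invoke the identity $t_{\cE\otimes \cF}=t_\cE\otimes t_\cF$ recorded in the preliminaries, which says that $t_E\otimes t_m$ is itself a transpose on $B(E)\otimes M_m$ and is therefore a positive involution. Applying it one more time to the displayed positive element recovers $(\eta\otimes I_m)(S)\succeq 0$, completing the proof. No real obstacle arises; the only step worth double-checking is the tensor-transpose identity above, and this is immediate from the bilinearity of $\otimes$ and the definition of $t_m$ once one expands both sides on the simple tensors $\eta(s_j)\otimes x_j$.
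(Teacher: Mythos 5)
Your proof is correct and is essentially the paper's own argument: both exploit that $t_E\circ\eta$ is cp, evaluate its amplification on the "partially transposed" element $T=\sum s_j\otimes t_m(x_j)$ (positive by the definition of $J_m(\cS)$), and transfer positivity back via the positive map $t_E\otimes t_m$, using the fact from the preliminaries that a tensor product of transposes is a transpose. The only cosmetic difference is that the paper applies $t_E\otimes t_m$ once to $((t_E\circ\eta)\otimes I_m)(T)$ and simplifies directly, whereas you state the identity first and then apply the transpose as an involution; the content is identical.
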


\begin{proof}
Let $t$ denote a transpose on $B(E)$ and $t_m$ the standard transpose on $M_m.$
Suppose $S=\sum_{j=1}^m s_j \otimes x_j \in J_m(\cS).$ Thus $S$ and also
$S^\prime =\sum s_j \otimes t_m(x_j)$ are positive. Let $I_m$ denote
the identity operator on $M_m$. Since $\eta$ is co-cp,
 $S^\prime \succeq 0,$ and $t \otimes t_m$ is positive, it follows that 
\[
   0 \preceq (t \otimes t_m) (t \circ \eta\otimes I_m)(S^\prime)
  = (\eta\otimes t_m)(S^\prime) =  (\eta \otimes I_m)(S).
\]
\end{proof}
Recall the standard matrix units $E_{j,k} \in M_m$. The following is a key positivity property that will be utilized to prove our main results.

\begin{lemma}
 \label{l:pre4implies3}
   Suppose that $m \in \N$, $S=\sum_{j,k = 1}^m s_{j,k}\otimes E_{j,k} \in \cS\otimes M_m$,
   $y_1,\dots,y_m\in\mathbb C^n$ and $T= \sum_{j,k = 1}^m s_{j,k} \otimes y_j y_k^* \in
   \cS \otimes M_n.$
\begin{enumerate}[(i)]
\item    If $S \succeq 0,$ then $T\succeq 0.$
\item   If $S\in J_m(\cS),$ then $T\in J_n(\cS).$
\end{enumerate}
\end{lemma}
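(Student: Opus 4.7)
The plan is to reduce both statements to a single congruence on the second tensor factor. Let $Y$ denote the $n\times m$ matrix with columns $y_1,\dots,y_m$, and consider the completely positive map $\Phi_Y:M_m\to M_n$ given by $\Phi_Y(X)=YXY^*$. A direct computation shows that $\Phi_Y(E_{j,k})=(Ye_j)(Ye_k)^*=y_jy_k^*$, and hence
\[
T=(\mathrm{id}_{\cS}\otimes\Phi_Y)(S).
\]
Writing this as an operator identity with $\cS\subseteq B(H)$, one has $T=(I_H\otimes Y)\,S\,(I_H\otimes Y^*)$, viewing $S$ as an operator on $H\otimes\C^m$ and $T$ as an operator on $H\otimes\C^n$. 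Part (i) is then immediate, since congruence by $I_H\otimes Y$ preserves positivity.

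For part (ii) I would supplement (i) with a parallel congruence that handles the second inequality defining $J_n(\cS)$. A short entrywise calculation gives $t_n(y_jy_k^*)=\overline{y_k}\,\overline{y_j}^{\,*}$. Setting $\overline{Y}$ to be the $n\times m$ matrix with columns $\overline{y_1},\dots,\overline{y_m}$ and relabelling indices $j\leftrightarrow k$, one verifies
\[
\sum_{j,k}s_{j,k}\otimes t_n(y_jy_k^*)=(I_H\otimes\overline{Y})\Big(\sum_{j,k}s_{j,k}\otimes E_{k,j}\Big)(I_H\otimes\overline{Y})^*.
\]
The hypothesis $S\in J_m(\cS)$ says precisely that $\sum_{j,k}s_{j,k}\otimes E_{k,j}=\sum_{j,k}s_{j,k}\otimes t_m(E_{j,k})\succeq 0$, so the right-hand side above is positive and, combined with part (i), this puts $T$ in $J_n(\cS)$.

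The only delicate point is the bookkeeping for the transpose in (ii): one must recognise that the index swap and complex conjugation produced by applying $t_n$ to the rank-one term $y_jy_k^*$ match exactly the entrywise swap $t_m(E_{j,k})=E_{k,j}$ built into the definition of $J_m(\cS)$, so that the two positivity hypotheses making up $J_m(\cS)$ feed into the two positivity requirements for $J_n(\cS)$ via parallel congruences by $I_H\otimes Y$ and $I_H\otimes\overline{Y}$.
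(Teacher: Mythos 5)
Your argument is correct and is essentially the paper's own proof: part (i) is the same congruence (the paper conjugates $S$ by $1\otimes\sum_\alpha e_\alpha y_\alpha^*$, which is exactly your $I_H\otimes Y^*$), and your part (ii) — congruence of $(\mathrm{id}\otimes t_m)(S)$ by $I_H\otimes\overline{Y}$ — is precisely the paper's step of applying part (i) to $S'$ with the conjugated vectors $z_j=\overline{y_j}$. No substantive difference.
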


\begin{proof}
 (i) Let $1$ denote the unit element in $\cS$, $\{e_1, \dots, e_m\}$ denote the standard orthonormal basis for $\C^m$ and $Y = {1} \otimes \sum_{\alpha = 1}^m e_\alpha y_\alpha^*$. It follows that 
\[
 Y^*SY = \sum_{\alpha, \beta = 1}^m \sum_{j,k = 1}^m s_{j,k} \otimes y_\beta \, [e_\beta^* \, E_{j,k} \, e_\alpha]\,  y_\alpha^*
    = \sum_{\alpha, \beta = 1}^m  s_{\beta,\alpha} \otimes y_\beta y_\alpha^* = T,
\]
since $e_\beta^* E_{j,k} e_\alpha =1$ if $(\alpha,\beta)=(k,j)$ and $0$ otherwise.
Thus if $S \succeq 0$, then so is $T.$

 (ii) Let $z_j =\overline{y_j}$, the  entrywise complex conjugate.
 Suppose $S\in J_m(\cS).$  By definition of $J_m(\cS)$,
\[
 S^\prime = (I\otimes t_m)(S)=\sum_{j,k =1}^m s_{k,j} \otimes E_{j,k}\succeq 0,
\]
 where $I$ is the identity operator on $\cS$
  and $t_m$ is the transpose on $M_m.$
 From part (i) it follows that $T\succeq 0$ and
\[
 T^\prime = (I\otimes t_m)(T) =\sum_{j,k =1}^m s_{k,j} \otimes y_j y_k^*  = \sum_{j,k =1}^m s_{j,k} \otimes
  z_j z_k^* \succeq 0.
\]
 Hence $T\in J_n(\cS).$
\end{proof}

Recall the dual functional $s_{\phi}$ associated to $\phi$, from equation \eqref{eq:dualfunc}.
\begin{lemma}
 \label{l:sphibasic}
  Suppose $\cS\subset \cA$ is an operator system and $\phi:\cS\to M_n$
  is a linear map.    If $s\in \cS$ and $y,z\in \mathbb C^n$, then
 \[
 \langle \phi(s)\overline{z},\overline{y} \rangle = s_\phi( s\otimes yz^*).
\]
 Moreover, if $S=\sum_{j,k = 1}^m s_{j,k}\otimes E_{j,k}\in \cS\otimes M_m$
 and $w= w_1 \, \oplus w_2 \, \oplus \, \cdots \, \oplus \, w_m =
 \sum_{j=1}^m e_j\otimes w_j \in \C^m \otimes \C^n,$ then
\[
 \langle \phi_m(S) \overline{w}, \overline{w}\rangle
 = s_\phi \left (\sum_{j,k =1}^m s_{j,k}\otimes w_j w_k^* \right ).
\]
\end{lemma}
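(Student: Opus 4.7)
Both parts of the lemma follow from the definition \eqref{eq:dualfunc} of $s_\phi$ by direct coordinate computation, and my plan is to carry these out while being careful only about conjugation conventions. The argument has no conceptual content beyond bookkeeping, so I would present it as two short calculations rather than invoke any structural result.

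For part (i), I would start from $s_\phi(s\otimes yz^*)=\langle(\phi(s)\otimes yz^*)\be,\be\rangle$. Applying $\phi(s)\otimes yz^*$ to $\be=\sum_{j=1}^n e_j\otimes e_j$ and using $z^*e_j=\overline{z_j}$ produces $\sum_{j}\overline{z_j}\,\phi(s)e_j\otimes y$. Pairing this with $\be$ and using $\langle y,e_k\rangle=y_k$ gives
\[
 s_\phi(s\otimes yz^*)=\sum_{j,k}\overline{z_j}\,y_k\,\langle\phi(s)e_j,e_k\rangle.
\]
On the other side, expanding $\overline{z}=\sum_j\overline{z_j}e_j$ and $\overline{y}=\sum_k\overline{y_k}e_k$ in $\langle\phi(s)\overline{z},\overline{y}\rangle$ and noting that the antilinear second slot turns $\overline{y_k}$ back into $y_k$ produces the same double sum, finishing (i).

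For part (ii), my plan is first to expand by bilinearity and apply (i) termwise:
\[
 s_\phi\Bigl(\sum_{j,k}s_{j,k}\otimes w_j w_k^*\Bigr)=\sum_{j,k}\langle\phi(s_{j,k})\overline{w_k},\overline{w_j}\rangle.
\]
The remaining task is to identify this double sum with $\langle \phi_m(S)\overline{w},\overline{w}\rangle$. Writing $\phi_m(S)=\sum_{j,k}\phi(s_{j,k})\otimes E_{j,k}$ and $\overline{w}=\sum_\ell e_\ell\otimes \overline{w_\ell}$, I would apply $\phi_m(S)$ block-by-block using $E_{j,k}e_\ell=\delta_{k\ell}e_j$, and then pair with $\overline{w}$; the result is exactly the sum above. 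Equivalently one may view $\phi_m(S)$ as the block matrix $[\phi(s_{j,k})]$ acting on the stacked vector $(\overline{w_1};\ldots;\overline{w_m})$, which gives the same expression directly.

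The main (and essentially only) obstacle is keeping track of the placement of complex conjugates and the order of tensor factors so that the outer product $yz^*$ ends up paired with $\langle\phi(s)\overline{z},\overline{y}\rangle$ rather than with $\langle\phi(s)y,z\rangle$; this is dictated by the convention $\be=\sum_j e_j\otimes e_j$ together with the identification \eqref{eq:id} used throughout the paper. Once these conventions are respected, no further difficulty arises.
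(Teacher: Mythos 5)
Your proposal is correct and follows essentially the same route as the paper: part (i) by expanding $\langle(\phi(s)\otimes yz^*)\be,\be\rangle$ in coordinates and matching conjugates, and part (ii) by linearity plus the termwise application of part (i), with the final identification of $\sum_{j,k}\langle\phi(s_{j,k})\overline{w_k},\overline{w_j}\rangle$ as $\langle\phi_m(S)\overline{w},\overline{w}\rangle$ via the block-matrix action (which the paper simply asserts and you spell out). No gaps.
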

\begin{proof} Compute
\begin{align*}
 s_\phi(s\otimes yz^*) &= \left \langle (\phi(s) \otimes yz^*)
 \sum_{j =1}^n e_j\otimes e_j, \sum_{k =1}^n e_k\otimes e_k
  \right  \rangle \\
  & = \sum_{j,k} (e_k^* \phi(s) e_j)\,  (e_k^*y)(z^*e_j)\\
  & = \sum_{j,k} (e_j^* \phi(s) e_k)\, (z^*e_k)(e_j^*y)\\
  & = \langle \phi(s) \overline{z},\overline{y}\rangle.
\end{align*}

The second part can be obtained from the first part by linearity,
as follows.
\begin{align*}
s_\phi \left (\sum_{j,k =1}^m s_{j,k}\otimes w_j w_k^* \right )
 & = \sum_{j,k=1}^m s_\phi (s_{j,k}\otimes w_j w_k^*)\\
 & = \sum_{j,k=1}^m \langle \phi(s_{j,k}) \overline{w_k}, \overline{w_j} \rangle
 = \langle \phi_m(S) \overline{w}, \overline{w} \rangle.
\end{align*}
\end{proof}

\end{section}

\begin{section}{The Proofs}
This section contains our main results. We begin with the following theorem which is a
more elaborate version of our main Theorem \ref{t:decomposable} stated in Section 1.
 The elaboration is in the sense that it also integrates another characterization
 of decomposable maps on C*-algebras due to E. St\o rmer (\cite{St82}).

\begin{theorem}
\label{t:decomposable-criteria}
Let $\cA$ be a unital C*-algebra, $\cS \subseteq \cA$ be an operator system and $\phi:\cS \rightarrow M_n$ be a linear map.
The following statements are equivalent.
\begin{enumerate}[(i)]
\item \label{i:dc1} $\phi$ is decomposable.
\item \label{i:dc2}  $\phi_m(S) \succeq 0$ for all $m \in \N$ and $S \in J_m(\cS).$
\item \label{i:dc3} $\phi_n(S)\succeq 0,$ for all $S \in J_n(\cS).$
\item \label{i:dc4} The linear functional  $s_{\phi}:M_n(\cS) \rightarrow \C$
 is positive on $J_n(\cS)$.
\end{enumerate}
\end{theorem}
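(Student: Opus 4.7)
My plan is to prove the cyclic chain $(\ref{i:dc1}) \Rightarrow (\ref{i:dc2}) \Rightarrow (\ref{i:dc3}) \Leftrightarrow (\ref{i:dc4}) \Rightarrow (\ref{i:dc1})$. The first three implications are quick consequences of the lemmas of Section~2; the substantive content is in the closing implication $(\ref{i:dc4}) \Rightarrow (\ref{i:dc1})$, which follows the extension strategy of Chapter~6 of \cite{P}.

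For $(\ref{i:dc1}) \Rightarrow (\ref{i:dc2})$, write $\phi = \phi_1 + \phi_2$ with $\phi_1$ cp and $\phi_2$ co-cp; for $S \in J_m(\cS)$ the term $(\phi_1)_m(S)$ is positive because $S \succeq 0$ and $\phi_1$ is cp, while $(\phi_2)_m(S)$ is positive by Lemma~\ref{l:etaJ} applied with $\eta = \phi_2$. The step $(\ref{i:dc2}) \Rightarrow (\ref{i:dc3})$ is the specialization $m = n$. For $(\ref{i:dc3}) \Leftrightarrow (\ref{i:dc4})$ I invoke Lemma~\ref{l:sphibasic}: with $m = n$ and the choice $w = \be = \sum_j e_j \otimes e_j$ (so $\overline{w} = \be$ and $w_j w_k^* = E_{j,k}$) one obtains the identity $\langle \phi_n(S) \be, \be \rangle = s_\phi(S)$, giving $(\ref{i:dc3}) \Rightarrow (\ref{i:dc4})$. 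Conversely, for arbitrary $w = \sum_j e_j \otimes w_j \in \C^n \otimes \C^n$, Lemma~\ref{l:sphibasic} identifies $\langle \phi_n(S) \overline{w}, \overline{w} \rangle$ with $s_\phi(T)$ for $T = \sum s_{j,k} \otimes w_j w_k^*$, and Lemma~\ref{l:pre4implies3}(ii) places $T$ in $J_n(\cS)$; thus $(\ref{i:dc4})$ forces $\phi_n(S) \succeq 0$ as $w$ varies.

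The main obstacle is $(\ref{i:dc4}) \Rightarrow (\ref{i:dc1})$, which I would attack in two stages, imitating Paulsen's treatment of the cp characterization. First, use an order-unit Hahn-Banach extension (with $1 \otimes I_n \in \cS \otimes M_n$ serving as the order unit and $J_n(\cA)$ as the positive cone in the ambient $\cA \otimes M_n$) to produce a linear functional $\tilde{s}$ on $\cA \otimes M_n$ that extends $s_\phi$ and remains non-negative on $J_n(\cA)$. The bijective correspondence $\psi \leftrightarrow s_\psi$ from the Remark following equation~\eqref{eq:inverse} then yields a unique $\tilde{\phi} : \cA \to M_n$ with $\tilde{\phi}|_{\cS} = \phi$ and $s_{\tilde{\phi}} = \tilde{s}$. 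Second, split $\tilde{s} = \tilde{s}_1 + \tilde{s}_2$ where $\tilde{s}_1 \geq 0$ on $(\cA \otimes M_n)^+$ and $\tilde{s}_2 \geq 0$ on $(I \otimes t_n)((\cA \otimes M_n)^+)$; via the same correspondence these yield a cp map $\tilde{\phi}_1$ and a co-cp map $\tilde{\phi}_2$ with $\tilde{\phi} = \tilde{\phi}_1 + \tilde{\phi}_2$, and restriction to $\cS$ delivers the decomposition of $\phi$.

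The genuinely delicate step is the splitting in the second stage: representing a functional that is positive on an intersection $K_1 \cap K_2$ of two cones as a sum of functionals each positive on one of them. This is the dual-cone identity $(K_1 \cap K_2)^* = \overline{K_1^* + K_2^*}$, and the issue is exactly whether one can remove the closure. I expect the finite-dimensional codomain $M_n$ to be the decisive feature: it should allow one either to verify that $K_1^* + K_2^*$ is already weak-$*$ closed (by a normalization and compactness argument on the image side) or to reduce to a finite-dimensional subproblem where the bipolar theorem applies without a closure. This closedness check is the place where I anticipate the proof requires genuinely new care beyond the cp case handled by Paulsen.
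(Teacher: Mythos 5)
Your handling of the implications $\eqref{i:dc1} \Rightarrow \eqref{i:dc2} \Rightarrow \eqref{i:dc3} \Leftrightarrow \eqref{i:dc4}$ is correct and coincides with the paper's argument: the forward direction uses Lemma~\ref{l:etaJ} exactly as in the text, and your proof of $\eqref{i:dc4} \Rightarrow \eqref{i:dc3}$ via Lemmas~\ref{l:sphibasic} and~\ref{l:pre4implies3}(ii) is precisely the paper's proof of $\eqref{i:dc4} \Rightarrow \eqref{i:dc2}$ specialized to $m=n$ (the general $m$ works identically, and nothing is lost by taking $m=n$ in your cycle). Where you genuinely diverge is the closing implication: the paper never proves $\eqref{i:dc4} \Rightarrow \eqref{i:dc1}$ directly; it proves $\eqref{i:dc4} \Rightarrow \eqref{i:dc2}$ and then cites St\o rmer \cite{St82} for $\eqref{i:dc2} \Rightarrow \eqref{i:dc1}$, with the remark that St\o rmer's C*-algebra argument adapts to operator systems. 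Your extension-plus-splitting strategy is in essence a reconstruction of that cited argument, so if carried through it would make the paper's proof self-contained --- a real gain --- but as written it is not carried through.

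The gap is the one you flag yourself: the decomposition $\tilde{s} = \tilde{s}_1 + \tilde{s}_2$ requires the dual-cone identity $(K_1 \cap K_2)^* = K_1^* + K_2^*$ \emph{without} the weak-$*$ closure, and you only state that you ``expect'' the finite-dimensionality of $M_n$ to rescue this. That expectation is correct, but the proof is incomplete until you supply the argument, and the finite dimensionality of $M_n$ is not really the operative point; what closes the gap is the order unit. Concretely: if $f = f_1 + f_2$ with $f_1 \geq 0$ on $K_1 = (\cA \otimes M_n)^+$ and $f_2 \geq 0$ on $K_2 = (I \otimes t_n)(K_1)$, then $\|f_i\| = f_i(1 \otimes I_n)$ (a positive functional on a unital C*-algebra attains its norm at the unit, and $1 \otimes I_n$ lies in both cones), so $\|f_1\| + \|f_2\| = f(1 \otimes I_n)$; hence any net in $K_1^* + K_2^*$ converging weak-$*$ has its summands in a fixed norm ball, Banach--Alaoglu extracts weak-$*$ convergent subnets, and $K_1^* + K_2^*$ is weak-$*$ closed. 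The bipolar theorem then gives $(K_1 \cap K_2)^* = K_1^* + K_2^*$ as needed. You should also verify, for the first stage, that $1 \otimes I_n$ is an order unit for the cone $J_n(\cA)$ in the self-adjoint part of $\cA \otimes M_n$ (this holds because the partial transpose $I \otimes t_n$ is bounded on $M_n(\cA)$ for fixed $n$, so $r(1 \otimes I_n) + X \in J_n(\cA)$ for $r$ large), since the Krein extension theorem needs the subspace $\cS \otimes M_n$ to be cofinal in $\cA \otimes M_n$ with respect to $J_n(\cA)$; and that $J_n(\cS) = J_n(\cA) \cap (\cS \otimes M_n)$, which is what makes positivity of $s_\phi$ on $J_n(\cS)$ the correct hypothesis for the extension. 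With these points supplied your route is sound and arguably more self-contained than the paper's.
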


 To prove $\eqref{i:dc1} \Rightarrow \eqref{i:dc2}$,
  let $\phi = \psi +\eta,$ where $\psi $ is cp and $\eta$ is co-cp,
  $m \in \N$ and $S \in J_m(\cS)$.
  By Lemma \ref{l:etaJ}, $(\eta \otimes I_m )(S) \succeq 0$ and by the
  complete positivity of $\psi$, $(\psi \otimes I_m )(S) \succeq 0$.
  Hence $\phi_m(S) \succeq 0$.\\

 A proof of $\eqref{i:dc2} \Rightarrow \eqref{i:dc1} $ for the case $\cS = \cA$,
 can be found in \cite{St82}. With minor modifications, the proof can be
 made to work for any non-trivial operator system $\cS \subset \cA$.
 Hence, we omit the proof.\\

The implications $\eqref{i:dc2} \Rightarrow \eqref{i:dc3}$ and $\eqref{i:dc3} \Rightarrow \eqref{i:dc4}$ are immediate.\\

 Using Lemmas \ref{l:pre4implies3} and \ref{l:sphibasic} and
 techniques from Chapter 6 of \cite{P}, we give a streamlined proof
 of \eqref{i:dc4} implies \eqref{i:dc2} below. Let $e_1,\dots,e_m$ denote the standard orthonormal basis for $\C^m$ and $E_{j,k} = e_j e_k^*$, the resulting matrix units in $M_m$.

\begin{proof}[Proof of $(iv) \Rightarrow (ii)$]
Let $m \in \N$, $S=\sum_{j,k=1}^m s_{j,k} \otimes E_{j,k} \in J_m(\cS)$ and 
$w = w_1 \oplus \dots \oplus w_m  = \sum_{j=1}^m e_j \otimes w_j \in \C^m \otimes \C^n$. 
 From Lemma \ref{l:sphibasic},
\begin{equation}
 \langle \phi_m(S) \overline{w}, \overline{w}\rangle = s_\phi \left(\sum_{j,k=1}^m s_{j,k}\otimes w_j w_k^* \right)
 =s_{\phi}(T).
\end{equation}
where $T=\sum_{j,k=1}^m s_{j,k} \otimes w_jw_k^*$.  Since $S\in J_m(\cS),$ Lemma
 \ref{l:pre4implies3} implies that $T\in J_n(\cS)$. Thus, by hypothesis
 $s_\phi(T)\succeq 0$.
 Since $w \in \C^m \otimes \C^n$ is arbitrary, it follows that $\phi_m(S)$ is positive and the
 proof is complete.
\end{proof}

The following theorem can be found in Chapter 6 of \cite{P}. The proof given there
uses the fact that every positive matrix in $M_k(\cA)$ is a finite sum of matrices of the form $[a_i^*a_j]$, where $a_1, \dots, a_k \in \cA$. It is observed that, one can obtain a proof without using this property, by using Lemma \ref{l:pre4implies3} instead, as indicated below.
\begin{theorem}
\label{t:cp-criteria}
Let $\cA$ be a unital C*-algebra, $\cS \subseteq \cA$ be an operator system and $\phi:\cS \rightarrow M_n$ be a linear map.
The following statements are equivalent.
\begin{enumerate}[(i)]
\item \label{i:cp1} $\phi$ is cp.
\item \label{i:cp2} The linear functional $s_{\phi}:M_n(\cS) \rightarrow \C$
is positive on $(\cS\otimes M_n)^+.$ 
\end{enumerate}
\end{theorem}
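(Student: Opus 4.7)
The plan is to prove both directions by a direct application of Lemmas \ref{l:pre4implies3} and \ref{l:sphibasic}, turning the complete positivity question into a bookkeeping exercise involving the vector $\be$ and the transformation $S \mapsto T$ from Lemma \ref{l:pre4implies3}(i).

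For $(i) \Rightarrow (ii)$, I would specialize Lemma \ref{l:sphibasic} to $m=n$ and $w=\be = \sum_{j=1}^n e_j \otimes e_j$. Since $w_j = e_j$, we have $w_j w_k^* = E_{j,k}$, so writing $S \in (\cS\otimes M_n)^+$ as $S = \sum_{j,k=1}^n s_{j,k}\otimes E_{j,k}$, the lemma gives
\[
s_\phi(S) = \langle \phi_n(S)\,\overline{\be},\overline{\be}\rangle = \langle \phi_n(S)\,\be,\be\rangle,
\]
where the last equality uses that $\be$ has real entries. If $\phi$ is cp, then $\phi_n(S) \succeq 0$, and the right side is nonnegative.

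For $(ii) \Rightarrow (i)$, fix $m \in \N$, an arbitrary $S = \sum_{j,k=1}^m s_{j,k}\otimes E_{j,k} \in (\cS\otimes M_m)^+$, and an arbitrary vector $w = \sum_{j=1}^m e_j \otimes w_j \in \C^m \otimes \C^n$. By Lemma \ref{l:sphibasic},
\[
\langle \phi_m(S)\,\overline{w},\overline{w}\rangle \;=\; s_\phi(T), \qquad T = \sum_{j,k=1}^m s_{j,k}\otimes w_jw_k^* \in \cS \otimes M_n.
\]
Lemma \ref{l:pre4implies3}(i) is precisely the tool that converts $S \succeq 0$ in $\cS \otimes M_m$ into $T \succeq 0$ in $\cS \otimes M_n$. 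By hypothesis (ii), $s_\phi(T) \ge 0$, so $\langle \phi_m(S)\,\overline{w},\overline{w}\rangle \ge 0$. Letting $\overline{w}$ range over all of $\C^m \otimes \C^n$ yields $\phi_m(S) \succeq 0$, and since $m$ and $S$ were arbitrary, $\phi$ is cp.

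There is no serious obstacle: the two preliminary lemmas do all the work. The only conceptual point worth flagging is that Lemma \ref{l:pre4implies3}(i) is what lets us bypass the structural identity (every positive matrix in $M_k(\cA)$ is a sum of matrices $[a_i^* a_j]$) used in \cite{P}. In effect, Lemma \ref{l:pre4implies3}(i) produces, for each test vector $w$, a single positive element $T \in \cS \otimes M_n$ on which positivity of $s_\phi$ alone suffices to control $\langle \phi_m(S)\overline w,\overline w\rangle$, reducing complete positivity at every level $m$ to positivity of $s_\phi$ on the fixed cone $(\cS \otimes M_n)^+$.
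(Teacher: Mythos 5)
Your proof is correct and follows essentially the same route as the paper: the $(ii)\Rightarrow(i)$ direction is verbatim the paper's argument (Lemma \ref{l:pre4implies3}(i) to produce $T\succeq 0$, then Lemma \ref{l:sphibasic} to identify $\langle\phi_m(S)\overline{w},\overline{w}\rangle$ with $s_\phi(T)$), and your $(i)\Rightarrow(ii)$ merely makes explicit, via the specialization $w=\be$ in Lemma \ref{l:sphibasic}, the identity $s_\phi(S)=\langle\phi_n(S)\be,\be\rangle$ that the paper dismisses as immediate from the definition of $s_\phi$.
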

\begin{proof}
That  \eqref{i:cp1} implies \eqref{i:cp2}  is immediate from the complete positivity of $\phi$ and the definition of $s_{\phi}$.
 To prove \eqref{i:cp2} implies \eqref{i:cp1},  let $m \in \N$ and
 $S=\sum_{j,k = 1}^m s_{j,k}\otimes E_{j,k} \in (\cS\otimes M_m)^+$ be given.
 Given $w =\sum_{j,k=1}^m e_j\otimes w_j \in \C^m \otimes \C^n$, it follows from
 part (i) of Lemma~\ref{l:pre4implies3}
  that $T=\sum s_{j,k} \otimes  w_j w_k^*  \in (\cS\otimes M_n)^+.$
  Hence, using Lemma \ref{l:sphibasic},
\[
 \langle \phi_m(S)\overline{w},\overline{w}\rangle = s_\phi(T) \succeq 0.
\]
 Thus $\phi_m(S)$ is positive and the result follows.
\end{proof}


\begin{proof}[Proof of Theorem~\ref{thm:alldecomp}.]
$(i) \Rightarrow (ii)$: Suppose not. Choose $p \in J_n(\cS)$ such that $p \not \in \overline{\cS^+ \otimes M_n^+}$. Let $A = \{p\}$ and $B = \overline{\cS^+ \otimes M_n^+}$. Observe that $A$ and $B$ satisfy the hypotheses of the
Hahn Banach separation theorem \cite[Theorem 3.4]{R}. It follows that there exists a continuous linear functional $\Lambda:\cS \otimes M_n \rightarrow \C$ and $\gamma_1, \gamma_2 \in \R$ such that
\[
Re(\Lambda(p)) < \gamma_1 < \gamma_2 <  Re(\Lambda(x))
\]
for all $x \in B$. Since $0 \in B$, it must be the case that $\gamma_2 \le 0$. Suppose that $Re(\Lambda(x_0)) < 0$ for some $x_0 \in B$. Since $B$ is a cone, $nx_0 \in B$ for all $n \in \N$. The above equation implies that $Re(\Lambda(nx_0)) = n Re(\Lambda(x_0)) > \gamma_2$ for all $n \in \N$. This is impossible, since $\gamma_2 \le 0$. Thus,
\[
Re(\Lambda(p)) < \gamma_1 < \gamma_2 \le 0 \le Re(\Lambda(x)),
\]
for all $x \in B$. Define $f:M_n(\cS) \rightarrow \C$ by $f(x) = \frac{1}{2}\left(\Lambda(x) + \overline{\Lambda(x^*)} \right)$. Observe that $f$ is a continuous linear functional which satisfies
\begin{equation}
\label{eq:natureofs}
f(p) < 0 \text{ and } f(x) \ge 0
 \end{equation}
 for all $x \in B$.  By equation \eqref{eq:inverse}, there exists $\phi: \cS\rightarrow M_n$ such that $f = s_{\phi}$.  Since $f$ is positive on $B$, by  Lemma \ref{l:sphi}, it follows that $\phi: \cS \rightarrow M_n$ is positive. Since $p \in J_n(\cS)$ and $f(p) = s_{\phi}(p) < 0$, it follows from Theorem \ref{t:decomposable-criteria} that $\phi:\cS \rightarrow M_n$ is not decomposable, a contradiction.\\

$(ii) \Rightarrow (i)$: Let $\psi:\cS \rightarrow M_n$ be a positive map. It follows from Lemma \ref{l:sphi} that, $s_\psi$ takes positive values on $\cS^+ \otimes M_n^+$, and hence also on $\overline{\cS^+ \otimes M_n^+}$. Since $J_n(\cS) \subseteq \overline{\cS^+ \otimes M_n^+}$, it follows that $s_{\psi}$ takes positive values on $J_n(\cS)$. An application of Theorem \ref{t:decomposable-criteria} yields the decomposability of $\psi$, and the proof is complete.
\end{proof}
Following \cite{St82}, we end with an application of Theorem \ref{t:decomposable-criteria}.
\begin{example}\rm
Consider the map $\phi:M_3 \rightarrow M_3$ defined by
\begin{equation}
\label{eq:choimap}
\phi \begin{pmatrix} x_{11} & x_{12} & x_{13}\\ x_{21} & x_{22} & x_{23} \\ x_{31} & x_{32} & x_{33} \end{pmatrix}  = \begin{pmatrix} x_{11} & -x_{12} & -x_{13}\\ -x_{21} & x_{22} & -x_{23} \\ -x_{31} & -x_{32} & x_{33} \end{pmatrix} + \mu \begin{pmatrix} x_{33} & 0 & 0\\ 0 & x_{11} & 0 \\ 0 & 0 & x_{22} \end{pmatrix},
\end{equation}
where $\mu \ge 1$. It was shown by M.D. Choi that the above map is a positive map but not decomposable (See \cite{C1} and \cite{C2}). Consider the matrix
\begin{equation}
\label{eq:ppt}
A(a) := \left( \begin{array}{ccc|ccc|ccc}
1 & 0 & 0 & 0 & 1 & 0 & 0 & 0 & 1 \\
0 & 1/a & 0 & 0 & 0 & 0 & 0 & 0 & 0 \\
0 & 0 & a & 0 & 0 & 0 & 0 & 0 & 0 \\
\hline
0 & 0 & 0 & a & 0 & 0 & 0 & 0 & 0 \\
1 & 0 & 0 & 0 & 1 & 0 & 0 & 0 & 1 \\
0 & 0 & 0 & 0 & 0 & 1/a & 0 & 0 & 0 \\
\hline
0 & 0 & 0 & 0 & 0 & 0 & 1/a & 0 & 0\\
0 & 0 & 0 & 0 & 0 & 0 & 0 & a & 0\\
1 & 0 & 0 & 0 & 1 & 0 & 0 & 0 & 1
\end{array} \right).
\end{equation}
We note that the matrix $A(a)$ is a minor refinement of the matrix that appears in page 403 of \cite{St82} and that $A(a)$ belongs to $J_3(M_3)$, if $a > 0 $ (Ex. 5(a) on Page 32 of \cite{J}). Also observe that $s_{\phi}(A(a)) = (a \mu - 1)$. Since $\mu \ge 1$, if one chooses $0 < a < \frac{1}{\mu}$, then it follows easily from Theorem \ref{t:decomposable-criteria} that $\phi:M_3 \rightarrow M_3$ is not decomposable. Since $\phi$ is a positive map, using Lemma \ref{l:sphi}, one can also conclude that the matrix $A(a)$ does not belong to $\overline{M_3^{+} \otimes M_3^{+}}$, whenever $0 < a < \frac{1}{\mu}$. 
\end{example}
\end{section}

\textbf{Acknowledgements:} The author would like to thank Prof. Scott McCullough for many helpful suggestions and Prof. Erling St\o rmer for useful comments during the initial stages of this work.


\end{document}